\newtheorem{thm}{Theorem}
\newtheorem{prop}[thm]{Proposition}
\newtheorem{lem}[thm]{Lemma}
\newtheorem{cor}[thm]{Corollary}
\newcommand{\veqt}{\mathord{%
\marginbox{.45ex 0ex 0ex 0ex}{\clipbox{.35ex 0ex 0ex -.03ex}{$q$}}%
\kern -.825ex\clipbox{.45ex .2ex 0ex -.03ex}{$\color{white}q$}%
\kern -.654ex\clipbox{.85ex .2ex 0ex -.03ex}{$\color{white}q$}%
\kern -.154ex\clipbox{.85ex .2ex 0ex -.03ex}{$\color{white}q$}%
\kern -.75ex\varepsilon\kern .1ex}%
}
\newcommand{\veqs}{\mathord{%
\marginbox{.4ex 0ex 0ex 0ex}{\clipbox{.35ex 0ex 0ex -.03ex}{$\scriptstyle q$}}%
\kern -.585ex\clipbox{.45ex .2ex 0ex -.03ex}{$\color{white}{\scriptstyle q}$}%
\kern -.664ex\clipbox{.6ex .2ex 0ex -.03ex}{$\color{white}{\scriptstyle q}$}%
\kern -.354ex\clipbox{.45ex .2ex 0ex -.03ex}{$\color{white}{\scriptstyle q}$}%
\kern -.55ex\varepsilon\kern .1ex}%
}
\newcommand{\veq}{\mathchoice{\veqt}{\veqt}{\veqs}{}}
\begin{document}
\renewcommand{\refname}{References}
\renewcommand{\proofname}{Proof\,}
\thispagestyle{empty}

\title[On the maximal tori]{On the maximal tori in finite\\ linear and unitary groups}
\author{{Andrei V. Zavarnitsine}}%
\address{Andrei V. Zavarnitsine
\newline\hphantom{iii} Sobolev Institute of Mathematics,
\newline\hphantom{iii} 4, Koptyug av.
\newline\hphantom{iii} 630090, Novosibirsk, Russia}%
\email{zav@math.nsc.ru}%

%\thanks{\sc Zavarnitsine, A.V.,
%On the maximal tori in finite linear and unitary groups}
%\thanks{\copyright \ 2005 Zavarnitsine A.V}
%\thanks{\rm This research is supported by RSF (project 14-21-00065). }
%\thanks{\it Received January, 1, 2005, published March, 1, 2005.}

\maketitle {\small
\begin{quote}
\noindent{\sc Abstract.} To follow up on the results of \cite{07ButGr.t}, we propose a computationally efficient explicit
cyclic decomposition of the maximal tori in the groups $\operatorname{SL}_n(q)$ and $\operatorname{SU}_n(q)$ and
their projective images. We also derive some corollaries to simplify practical calculation of the maximal tori.
The result is based on a generic cyclic decomposition of a finite abelian group which might also be of interest.

\medskip
\noindent{\sc Keywords:} maximal torus, cyclic decomposition.

\medskip
\noindent{\sc MSC2010: 20E99, 20G40}
\end{quote}
}

\section{Introduction}

The maximal tori in finite groups of Lie type have been extensively studied. For
$\operatorname{SL}_n(q)$ and $\operatorname{SU}_n(q)$ as well as their projective versions $\operatorname{PSL}_n(q)$
and $\operatorname{PSU}_n(q)$, the structure of maximal tori is clarified in \cite[Theorems 2.1, 2.2]{07ButGr.t}.
The conjugacy classes of maximal tori in these groups are parameterized by unordered partitions of $n$.
The cyclic decomposition form \cite{07ButGr.t} for a maximal torus corresponding to the
partition $n=n_1+\ldots +n_s$ is canonical, but it has combinatorial computational growth as the
number $s$ of components of the partition grows, see Theorem \ref{bg} below. We propose another cyclic
decomposition for these tori which might be useful in practical computations, see Theorem \ref{main}.
It is based on a generic cyclic decomposition of a finite abelian group stated in Proposition \ref{pdec}.

In order to formulate the main result, we introduce some notation. For a nonzero integer $n$, we denote by
$\mathbb{Z}_n$
a cyclic group of order $|n|$. Let $q$ be a prime power. Denote $\operatorname{(P)SL}_n(-q)=\operatorname{(P)SU}_n(q)$
and let $\varepsilon=\pm 1$.
Throughout, the ligature $\veq$ stands for the product $\varepsilon q$. The $\operatorname{gcd}$ and $\operatorname{lcm}$ of nonzero
integers $n_1,\ldots, n_s$ are assumed to be positive and denoted by
$(n_1,\ldots,n_s)$ and $[n_1,\ldots,n_s]$, respectively.

\begin{thm}\label{main} Let $T$ be a maximal torus of $\operatorname{SL}_n(\veq)$ parameterized by the
partition $n=n_1+\ldots +n_s$. Denote
\begin{align}
\begin{split}\label{aidef}
  a_1 & =\veq^{(n_1,\ldots,n_s)}-1, \\
  a_2 & =[\veq^{n_1}-1,\veq^{(n_2,\ldots,n_s)}-1], \\
  a_3 & =[\veq^{n_2}-1,\veq^{(n_3,\ldots,n_s)}-1], \\
   & \vdots \\
  a_{s-1} & =[\veq^{n_{s-2}}-1,\veq^{(n_{s-1},n_s)}-1], \\
  a_s & =[\veq^{n_{s-1}}-1,\veq^{n_s}-1].
\end{split}
\end{align}
Then
\begin{equation}\label{tdec}
  T\cong \mathbb{Z}_{a_1'}\times \mathbb{Z}_{a_2}\times\ldots\times \mathbb{Z}_{a_s},
\end{equation}
where $a_1'=a_1/(\veq-1)$.

Let $\overline{T}$ be the image of $T$ in $\operatorname{PSL}_n(\veq)$. Set
$$
d=(n,\veq-1),\qquad d'=\left(n/(n_1,\ldots,n_s),\veq-1\right).
$$
Relabelling $a_2,\ldots,a_s$ arbitrarily if necessary, denote
\begin{align}
\begin{split}\label{bidef}
  b_2 & =(a_2,\ldots,a_s), \\
  b_3 & =[a_2,(a_3,\ldots,a_s)], \\
   & \vdots \\
  b_{s-1} & =[a_{s-2},(a_{s-1},a_s)], \\
  b_s & =[a_{s-1},a_s].
\end{split}
\end{align}
Then
\begin{equation}\label{ovtdec}
\overline{T}\cong \mathbb{Z}_{b_1'}\times \mathbb{Z}_{b_2'}\times\mathbb{Z}_{b_3}\ldots\times \mathbb{Z}_{b_s},
\end{equation}
where $b_1'=d'a_1/d(\veq-1)$ and $b_2'=b_2/d'$.
\end{thm}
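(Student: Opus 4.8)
The plan is to derive both isomorphisms from Proposition~\ref{pdec} by analyzing the determinant (norm) map and the central subgroup one prime at a time. By \cite{07ButGr.t} (Theorem~\ref{bg}) the ambient torus in $\operatorname{GL}_n(\veq)$ attached to the partition is $\widehat T\cong\mathbb{Z}_{c_1}\times\cdots\times\mathbb{Z}_{c_s}$ with $c_i=\veq^{n_i}-1$, and $T$ is the kernel of $\det\colon\widehat T\to\mathbb{Z}_{\veq-1}$. On the $i$-th factor $\det$ is the field norm $\mathbb{F}_{\veq^{n_i}}^{\times}\to\mathbb{F}_{\veq}^{\times}$, which is onto and sends a generator to a generator; so after adjusting generators $\det$ becomes $(x_1,\dots,x_s)\mapsto x_1+\cdots+x_s \bmod(\veq-1)$. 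Using $(\veq^{a}-1,\veq^{b}-1)=\veq^{(a,b)}-1$, the quantities in \eqref{aidef} read $a_1=(c_1,\dots,c_s)$ and $a_k=[c_{k-1},(c_k,\dots,c_s)]$ for $k\ge2$, which is exactly Proposition~\ref{pdec} applied to $c_1,\dots,c_s$; thus $\widehat T\cong\mathbb{Z}_{a_1}\times\cdots\times\mathbb{Z}_{a_s}$.

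To get \eqref{tdec} I would make this decomposition compatible with $\det$ prime by prime. Fix a prime $p$, set $\alpha_i=v_p(c_i)$ and $\mu=v_p(\veq-1)$, so $\mu\le\alpha_i$ since $\veq-1\mid c_i$. In the $p$-component $\bigoplus_i\mathbb{Z}_{p^{\alpha_i}}=\langle e_i\rangle$, pick $i_0$ with $\alpha_{i_0}=\min_i\alpha_i$ and replace each $e_i$ ($i\ne i_0$) by $e_i-e_{i_0}$; then $\det$ annihilates every new basis vector except $e_{i_0}$, and minimality of $\alpha_{i_0}$ guarantees that the new vectors still span a complement isomorphic to $\bigoplus_{i\ne i_0}\mathbb{Z}_{p^{\alpha_i}}$. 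Hence the $p$-part of $T=\ker\det$ is $\mathbb{Z}_{p^{\alpha_{i_0}-\mu}}\oplus\bigoplus_{i\ne i_0}\mathbb{Z}_{p^{\alpha_i}}$. The prime-by-prime content of Proposition~\ref{pdec} is precisely that $\{v_p(a_1),\dots,v_p(a_s)\}=\{\alpha_1,\dots,\alpha_s\}$ as multisets with $v_p(a_1)=\min_i\alpha_i$; comparing multisets and noting $v_p(a_1')=v_p(a_1)-\mu$ yields \eqref{tdec}.

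For the projective image, let $Z=Z(\operatorname{SL}_n(\veq))$, a cyclic group of order $d=(n,\veq-1)$ contained in every maximal torus, so $\overline T=T/Z$. Under $\widehat T\cong\prod_i\mathbb{Z}_{c_i}$ the group $Z$ is generated by the diagonal scalar $z=(c_1/d,\dots,c_s/d)$. Since the definitions \eqref{bidef} are Proposition~\ref{pdec} applied to $a_2,\dots,a_s$, that proposition refines \eqref{tdec} to $T\cong\mathbb{Z}_{a_1'}\times\mathbb{Z}_{b_2}\times\cdots\times\mathbb{Z}_{b_s}$, in which $b_2=(a_2,\dots,a_s)$ is the smallest factor (one checks $b_2\mid b_k$ prime by prime). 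It then remains to locate the image of $z$ in this decomposition and quotient out.

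The hard part is the bookkeeping for $z$. Working prime by prime, one must track how the diagonal vector $z$ distributes over the $\gcd/\operatorname{lcm}$-structured factors; the two invariants $d$ and $d'=(n/(n_1,\dots,n_s),\veq-1)$ appear because the ``$a_1$''-direction is governed by the common part $g=(n_1,\dots,n_s)$ of the exponents while the remaining directions are governed by $n/g$. Concretely, I expect to show that the component of $z$ in the $a_1'$-factor has order $d/d'$ and its component in the smallest remaining factor $b_2$ has order $d'$, the distribution being forced by minimality together with the valuations $v_p(\veq^{n_i}-1)$ and $v_p(\sum_i c_i)$; the delicate point, visible already for degenerate partitions such as $n=1+\cdots+1$, is that these orders must be read with clamping, since a factor cannot be reduced below the trivial group, so the true content is the divisibilities $d/d'\mid a_1'$, $d'\mid b_2$ together with a primitivity statement for $z$. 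Granting this, quotienting by $Z$ divides the $a_1'$-factor by $d/d'$ and the $b_2$-factor by $d'$, replacing $a_1'$ by $b_1'=d'a_1/d(\veq-1)$ and $b_2$ by $b_2'=b_2/d'$ while leaving $b_3,\dots,b_s$ fixed, which is \eqref{ovtdec}. I anticipate this valuation analysis, rather than any group-theoretic step, to be the principal obstacle.
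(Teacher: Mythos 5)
Your treatment of $T$ itself is essentially sound, though it takes a different route from the paper: you recompute $T$ as the kernel of the norm map on $\widehat T\cong\prod_i\mathbb{Z}_{\veq^{n_i}-1}$ prime by prime, whereas the paper simply quotes Theorem~\ref{bg} for the canonical decomposition $T\cong\mathbb{Z}_{d_1/(\veq-1)}\times\mathbb{Z}_{d_2}\times\dots\times\mathbb{Z}_{d_s}$, observes that $a_1=d_1=(m_1,\dots,m_s)$, and then converts $\mathbb{Z}_{d_2}\times\dots\times\mathbb{Z}_{d_s}$ into $\mathbb{Z}_{a_2}\times\dots\times\mathbb{Z}_{a_s}$ by Proposition~\ref{pdec} together with cancellation of the common direct factor $\mathbb{Z}_{d_1}$. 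Your version buys independence from the cited structure theorem at the cost of redoing its proof; the basis change $e_i\mapsto e_i-e_{i_0}$ works (minimality of $\alpha_{i_0}$ makes it an automorphism), provided you also normalize the $p$-part generators so that all $\det(e_i)$ coincide, a point you gloss over but which is fixable.

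The projective half, however, has a genuine gap, and it sits exactly where you say you ``expect'' and ``anticipate'' the argument to go. You propose to locate the central element $z$ in the decomposition $\mathbb{Z}_{a_1'}\times\mathbb{Z}_{b_2}\times\dots\times\mathbb{Z}_{b_s}$ and to show its components in the $\mathbb{Z}_{a_1'}$- and $\mathbb{Z}_{b_2}$-factors have orders $d/d'$ and $d'$, so that quotienting divides those two factors. As stated this cannot be correct in general: $Z\cong\mathbb{Z}_d$ is cyclic, so the orders $o_1,o_2$ of the two components must satisfy $[o_1,o_2]=d$, which fails for $(o_1,o_2)=(d/d',d')$ whenever $(d/d',d')>1$ --- e.g.\ $n=2+2$ with $4\mid\veq-1$ gives $d=4$, $d'=2$. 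So $Z$ is not the product of its projections, and $(\mathbb{Z}_{a_1'}\times\mathbb{Z}_{b_2})/Z$ is not computed by dividing each factor by the order of a projection; one has to produce a \emph{new} direct decomposition adapted to $Z$ (or argue as in \cite{07ButGr.t}). You flag this yourself (``clamping'', ``primitivity statement''), but flagging it is not proving it, and this valuation/primitivity analysis is precisely the content of the projective part of Theorem~\ref{bg} that the paper deliberately does not reprove. The paper instead reads off from Theorem~\ref{bg} that $\overline T\cong(A_1/A_1'')\times(A_2/A_2'')\times A''$ with $|A_1''|=d(\veq-1)/d'$, $|A_2''|=d'$, identifies $d_2=(a_2,\dots,a_s)=b_2$ from the uniqueness of the first invariant factor, and applies Proposition~\ref{pdec} once more, with cancellation, to get $A''\cong\mathbb{Z}_{b_3}\times\dots\times\mathbb{Z}_{b_s}$, which yields (\ref{ovtdec}) with no analysis of $z$ at all. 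To complete your argument you would either have to carry out that adapted-basis computation for $Z$ in full, or do as the paper does and lean on the already-known decomposition of $\overline T$.
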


Theorem \ref{main} allows us to give a simplified cyclic decomposition of maximal tori of
$\operatorname{SL}_n(\veq)$ in many particular partition cases. For example, the following rather general fact holds.

\begin{cor}\label{cmain}
Let $T$ be a maximal torus of $\operatorname{SL}_n(\veq)$ parameterized by the
partition $n=n_1+\ldots +n_s$. Denote
$t=(n_1,\ldots,n_s)$ and let $n_i=tn_i'$ for $i=1,\ldots,s$.
If $(n_i',n_j')=1$ for $i\ne j$ then we have
\begin{equation}\label{ninj}
T\cong \mathbb{Z}_{(\veq^t-1)/(\veq-1)}\times \mathbb{Z}_{(\veq^{n_i}-1)(\veq^{n_j}-1)/(\veq^t-1)}
\times \prod_{\substack{\mathclap{k=1,\ldots,s}\\ k\ne i,j}}\mathbb{Z}_{\veq^{n_k}-1}.
\end{equation}
In particular,
\renewcommand{\labelitemi}{$\bullet$}
\begin{itemize}
%\item\label{i1} if $n_i=1$ for some $i$ then
%$$
%T\cong \prod_{\substack{\mathclap{k=1,\ldots,s}\\ k\ne i}}\mathbb{Z}_{\veq^{n_k}-1};
%$$
\item if $n_i'=1$ for some $i$ then
\begin{equation}\label{cni1}
T\cong \mathbb{Z}_{(\veq^t-1)/(\veq-1)}\times \ \prod_{\substack{\mathclap{k=1,\ldots,s}\\ k\ne i}}\ \mathbb{Z}_{\veq^{n_k}-1};
\end{equation}
\item if $(n_i,n_j)=1$ for $i\ne j$ then
$$
T\cong \mathbb{Z}_{(\veq^{n_i}-1)(\veq^{n_j}-1)/(\veq-1)} \times \prod_{\substack{\mathclap{k=1,\ldots,s} \\ k\ne i,j}}\mathbb{Z}_{\veq^{n_k}-1};
$$
\item if $n_1=\ldots=n_s\ \ (=t)$ then
\begin{equation}\label{enieq}
T\cong \mathbb{Z}_{(\veq^t-1)/(\veq-1)}\times \mathbb{Z}_{\veq^t-1}^{\,s-1}.
\end{equation}
\end{itemize}
\end{cor}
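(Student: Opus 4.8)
The plan is to derive everything from Theorem~\ref{main}, the main work being the general identity~\eqref{ninj}; the three displayed special cases then fall out by direct substitution. The first observation is that the isomorphism type of $T$ depends only on the \emph{unordered} partition $n=n_1+\ldots+n_s$, since the conjugacy class of $T$ is determined by that partition. Hence we are free to permute the parts before applying Theorem~\ref{main}, and the right-hand side of~\eqref{tdec} computed in any order must reproduce the same group up to isomorphism. To prove~\eqref{ninj} for a given pair $i\neq j$ I would therefore reorder the parts so that $\{i,j\}$ occupies the last two slots, i.e.\ becomes $\{s-1,s\}$, and then read off the decomposition~\eqref{tdec} in this order.

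It remains to evaluate the $a_k$ of~\eqref{aidef} under the hypothesis. Writing $d_k=(n_k,\ldots,n_s)$ for the suffix gcds, the standard identity $(\veq^{a}-1,\veq^{b}-1)=\veq^{(a,b)}-1$ (valid also for the negative base $\veq=-q$, with gcd's taken positive) together with $(n_{k-1},d_k)=d_{k-1}$ turns each lcm into
\[
a_k=[\veq^{n_{k-1}}-1,\veq^{d_k}-1]=\frac{(\veq^{n_{k-1}}-1)(\veq^{d_k}-1)}{\veq^{d_{k-1}}-1},\qquad 2\le k\le s,
\]
while $a_1=\veq^{t}-1$ with $t=d_1=(n_1,\ldots,n_s)$. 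Now I would use the hypothesis $(n_i',n_j')=1$: since the reduced parts are pairwise coprime, every suffix of length at least two has reduced gcd $1$, so $d_k=t$ for $1\le k\le s-1$ and $d_s=n_s$. Substituting gives $a_k=\veq^{n_{k-1}}-1$ for $2\le k\le s-1$ (the factor $(\veq^{d_k}-1)/(\veq^{d_{k-1}}-1)$ telescopes to $1$) and $a_s=(\veq^{n_{s-1}}-1)(\veq^{n_s}-1)/(\veq^{t}-1)$. Together with $a_1'=(\veq^{t}-1)/(\veq-1)$ this is exactly~\eqref{ninj} with $\{i,j\}=\{s-1,s\}$; undoing the relabelling yields the claim for the original $i,j$.

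For the three special cases I would simply specialize~\eqref{ninj}. If $n_i'=1$ then $\veq^{n_i}-1=\veq^{t}-1$ cancels the denominator, collapsing the second factor to $\mathbb{Z}_{\veq^{n_j}-1}$ and producing~\eqref{cni1}. If $(n_i,n_j)=1$ for some pair, then $t\mid(n_i,n_j)$ forces $t=1$, so the first factor $\mathbb{Z}_{(\veq-1)/(\veq-1)}$ is trivial and $\veq^{t}-1=\veq-1$, giving the stated product. Finally, if $n_1=\ldots=n_s=t$ then $\veq^{n_k}-1=\veq^{t}-1$ for all $k$, the second factor simplifies to $\mathbb{Z}_{\veq^{t}-1}$, and the remaining $s-2$ factors account for $\mathbb{Z}_{\veq^{t}-1}^{\,s-1}$ in~\eqref{enieq}.

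The main obstacle I anticipate is bookkeeping rather than depth: making the telescoping of the consecutive lcm's rigorous for \emph{every} admissible ordering, and in particular verifying the suffix-gcd reduction $d_k=t$ ($k\le s-1$) uses pairwise (not merely consecutive) coprimality of the $n_k'$. A secondary point needing care is the gcd identity $(\veq^{a}-1,\veq^{b}-1)=\veq^{(a,b)}-1$ in the unitary case $\veq=-q$, where the signs of $\veq^{a}-1$ must be tracked before passing to absolute values; a short parity check on the exponents confirms it, and it is in any case the same identity already underlying Theorem~\ref{main}.
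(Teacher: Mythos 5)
Your proof is correct and follows essentially the same route as the paper: relabel the parts so that the coprime pair occupies positions $s-1,s$, evaluate the $a_k$ of (\ref{aidef}) via Lemma \ref{eqab} and the telescoping of the suffix gcds, and then specialize; the paper merely packages the same computation as a base change $\veq\mapsto\veq^t$ that reduces to the case $t=1$. One small imprecision worth noting: the hypothesis is coprimality of the single pair $(n_i',n_j')$, not pairwise coprimality of all reduced parts, but your key claim that $(n_k,\ldots,n_s)=t$ for $k\leqslant s-1$ already follows from that single pair once it has been moved to the last two slots (every such suffix contains both $n_{s-1}$ and $n_s$), so your argument — contrary to the worry in your final paragraph — needs nothing more.
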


Decomposition (\ref{enieq}) can also be readily deduced from \cite{07ButGr.t}, see Theorem \ref{bg} below.
\medskip

\noindent{\bf Example 1.} Let the decomposition be  $n=1+2+3+4+5+6$.  Applying directly the result of \cite{07ButGr.t}
gives the following structure of the corresponding torus of $\operatorname{SL}_{21}(\veq)$:
$$
T\cong \mathbb{Z}_{\veq-1}^2\times \mathbb{Z}_{\veq^2-1}\times \mathbb{Z}_{(\veq^3-1)(\veq+1)}
\times \mathbb{Z}_{(\veq^6-1)(\veq^4+\veq^3+\veq^2+\veq+1)(\veq^2+1)},
$$
whereas (\ref{cni1}) yields
$$
T\cong \mathbb{Z}_{\veq^2-1}\times \mathbb{Z}_{\veq^3-1}
\times \mathbb{Z}_{\veq^4-1}\times \mathbb{Z}_{\veq^5-1}\times \mathbb{Z}_{\veq^6-1}.
$$

\noindent{\bf Example 2.} Let $\varepsilon = -1$ and let $n=3+6+6+9$. Then $t=3$ and (\ref{cni1}) implies that the
corresponding torus of $\operatorname{SU}_{24}(q)$ has the structure
$$
T\cong \mathbb{Z}_{q^2-q+1}\times \mathbb{Z}_{q^6-1}\times \mathbb{Z}_{q^6-1}\times \mathbb{Z}_{q^9+1}.
$$

\medskip

Expression (\ref{ninj}) alone allows us to explicitly write down by hand decompositions of all
maximal tori of $\operatorname{SL}_n(\veq)$ for $n\leqslant 30$. For example, the tori of $\operatorname{SL}_{10}(\veq)$ are
listed in Table \ref{sut}. The following case, however, is not covered by (\ref{ninj}).

\medskip \noindent{\bf Example 3.} Let $n=6+10+15$. Then, depending on the
ordering of $n_i$'s, the corresponding torus of $\operatorname{SL}_{31}(\veq)$ can be decomposed by Theorem \ref{main} in three ways
\begin{align*}
T \  \cong & \ \ \mathbb{Z}_{(\veq^{15}-1)(\veq+1)}\times \mathbb{Z}_{(\veq^{10}-1)(\veq^4+\veq^2+1)} \\
   \cong & \ \ \mathbb{Z}_{(\veq^{10}-1)(\veq^2+\veq+1)}\times \mathbb{Z}_{(\veq^{15}-1)(\veq^3+1)} \\
   \cong & \ \ \mathbb{Z}_{(\veq^6-1)(\veq^4+\veq^3+\veq^2+\veq+1)}\times \mathbb{Z}_{(\veq^{15}-1)(\veq^5+1)},
\end{align*}
whereas \cite{07ButGr.t} gives a fourth decomposition
$$
T \  \cong \ \mathbb{Z}_{(\veq^{15}-1)(\veq^5+1)(\veq^2-\veq+1)}\times \mathbb{Z}_{(\veq^5-1)(\veq^2+\veq+1)(\veq+1)}.\\
$$

\medskip
Nevertheless, we can generalize (\ref{ninj}) to include this case as follows.

\begin{cor}\label{cmaing} In the notation of Corollary \ref{cmain}, if $(n_i',n_j',n'_k)=1$ for pairwise distinct $i,j,k$ then
$$
T\cong \mathbb{Z}_{(\veq^t-1)/(\veq-1)}\times \mathbb{Z}_{[\veq^{n_i}-1,\veq^{(n_j,n_k)}-1]}
\times \mathbb{Z}_{[\veq^{n_j}-1,\veq^{n_k}-1]}
\times \prod_{\substack{\mathclap{l=1,\ldots,s}\\ l\ne i,j,k}}\mathbb{Z}_{\veq^{n_l}-1}.
$$
\end{cor}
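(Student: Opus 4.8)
The plan is to apply the $\operatorname{SL}$-part of Theorem \ref{main} (equations (\ref{aidef}) and (\ref{tdec})) after relabelling the parts of the partition conveniently. Because the factors $a_2,\ldots,a_s$ depend on the order in which $n_1,\ldots,n_s$ are listed, I would put the three distinguished parts last, placing $n_i,n_j,n_k$ in positions $s-2,s-1,s$ respectively and the remaining parts $n_l$ (with $l\ne i,j,k$) in positions $1,\ldots,s-3$ in any order. With this ordering the two rightmost factors are read off directly from (\ref{aidef}): position $s-1$ holds $n_j$ and position $s$ holds $n_k$, so $a_s=[\veq^{n_j}-1,\veq^{n_k}-1]$; and position $s-2$ holds $n_i$, so $a_{s-1}=[\veq^{n_i}-1,\veq^{(n_j,n_k)}-1]$. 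These are exactly the two lcm factors appearing in the claim.

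The heart of the argument is the collapse of the remaining factors $a_2,\ldots,a_{s-2}$. For each $m$ with $2\le m\le s-2$, the gcd $(n_m,\ldots,n_s)$ is taken over a block of parts that always contains $n_i,n_j,n_k$ (these occupy positions $s-2,s-1,s$, all $\ge m$), so it divides $(n_i,n_j,n_k)$. By hypothesis $(n_i,n_j,n_k)=t(n_i',n_j',n_k')=t$, while $t$ divides every part and hence divides $(n_m,\ldots,n_s)$; therefore $(n_m,\ldots,n_s)=t$ for all such $m$. Since $t\mid n_{m-1}$ gives $\veq^t-1\mid\veq^{n_{m-1}}-1$, the lcm defining $a_m$ degenerates to $a_m=\veq^{n_{m-1}}-1$, and as $m$ ranges over $2,\ldots,s-2$ the indices $n_{m-1}$ sweep out precisely the parts $n_l$ with $l\ne i,j,k$. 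Finally $a_1=\veq^{(n_1,\ldots,n_s)}-1=\veq^t-1$ yields $a_1'=(\veq^t-1)/(\veq-1)$. Substituting all of these into (\ref{tdec}) and using commutativity of the direct product gives the stated decomposition.

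I do not expect a genuine difficulty here: once the ordering is fixed the result is essentially bookkeeping. The one substantive point -- and the only place the hypothesis is used -- is the observation that $(n_i',n_j',n_k')=1$ forces $(n_m,\ldots,n_s)=t$ for every $m\le s-2$, which is exactly what makes all the intermediate lcm's degenerate into the plain factors $\veq^{n_l}-1$. (The degenerate cases, e.g. $s=3$ where there are no parts $l\ne i,j,k$ and the collapse step is vacuous, are handled by the same computation.)
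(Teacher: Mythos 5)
Your proof is correct and follows essentially the route the paper intends for this corollary (which it leaves as ``proved similarly'' to Corollary \ref{cmain}): fix an ordering with $n_i,n_j,n_k$ in the last three positions, read the $a_m$ off (\ref{aidef}), and use $(n_i,n_j,n_k)=t$ to collapse the intermediate factors to $\veq^{n_l}-1$. The only cosmetic difference is that the paper's template first reduces to the case $t=1$ via the substitution $\veq\mapsto\veq^t$ as in (\ref{tan}), whereas you handle general $t$ in a single pass.
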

A similar generalization can be inferred from Theorem \ref{main} for arbitrarily many coprime numbers $n_i'$.

The projective case is somewhat more complicated. The following particular partitions of $n$ yield a simplified decomposition
of $\overline{T}$.

\begin{cor}\label{corp}
Let $\overline{T}$ be the image in $\operatorname{PSL_n}(\veq)$ of a maximal torus of $\operatorname{SL}_n(\veq)$ parameterized by the
partition $n=n_1+\ldots +n_s$. Denote $t=(n_1,\ldots,n_s)$, $d=(n,\veq-1)$, and $d'=(n/t,\veq-1)$.

\noindent
$(i)$ If $s=1$ then $\overline{T}\cong \mathbb{Z}_{(\veq^n-1)/d(\veq-1)}$.

\noindent
$(ii)$ If $s=2$ then
\begin{equation}\label{s2}
\overline{T}\cong \mathbb{Z}_{d'(\veq^t-1)/d(\veq-1)} \times \mathbb{Z}_{(\veq^{n_1}-1)(\veq^{n_2}-1)/d'(\veq^t-1)}.
\end{equation}

\noindent
$(iii)$ If $n_i=n_j=1$ for $i\ne j$ then
$$
\overline{T}\cong \mathbb{Z}_{(\veq-1)/d}\times\  \prod_{\substack{\mathclap{r=1,\ldots,s}\\ r\ne i,j}}\ \mathbb{Z}_{\veq^{n_r}-1};
$$

\noindent
$(iv)$ If $n_i=1$ and $(n_j,n_k)=1$ for pairwise distinct $i,j,k$, then
$$
\overline{T}\cong \mathbb{Z}_{(\veq-1)/d}
\times \mathbb{Z}_{(\veq^{n_j}-1)(\veq^{n_k}-1)/(\veq-1)}
\times \ \ \prod_{\substack{\mathclap{r=1,\ldots,s}\\ \mathclap{r\ne i,j,k}}}\ \ \mathbb{Z}_{\veq^{n_r}-1}.
$$

\noindent
$(v)$ If $(n_i,n_j)=1$ and $(n_k,n_l)=1$ for pairwise distinct $i,j,k,l$, then
$$
\overline{T}\cong \mathbb{Z}_{(\veq-1)/d}
\times \mathbb{Z}_{(\veq^{n_i}-1)(\veq^{n_j}-1)/(\veq-1)}
\times \mathbb{Z}_{(\veq^{n_k}-1)(\veq^{n_l}-1)/(\veq-1)}
\times \ \ \prod_{\substack{\mathclap{r=1,\ldots,s}\\ \mathclap{r\ne i,j,k,l}}}\ \ \mathbb{Z}_{\veq^{n_r}-1}.
$$

\noindent
$(vi)$ Assume that $n_i=t$ for some $i$. Set
$r=\operatorname{gcd}\{n_l\mid l\ne i\}$.

\begin{itemize}
\item[$(vi.1)$] If $n_j=r$ for $j\ne i$ then
$$
\overline{T}\cong \mathbb{Z}_{d'(\veq^t-1)/d(\veq-1)} \times \mathbb{Z}_{(\veq^r-1)/d'}
\times \prod_{\substack{\mathclap{l=1,\ldots,s}\\ l\ne i,j}}\mathbb{Z}_{\veq^{n_l}-1}.
$$

\noindent
In particular, if $n_1=\ldots=n_s\ \ (=t)$ then
\begin{equation}\label{pnieq}
\overline{T}\cong  \mathbb{Z}_{d'(\veq^t-1)/d(\veq-1)} \times \mathbb{Z}_{(\veq^t-1)/d'}\times \mathbb{Z}_{\veq^t-1}^{s-2}.
\end{equation}

\item[$(vi.2)$] If $(n_j,n_k)=r$ for pairwise distinct $i,j,k$ then
\begin{equation}\label{iv2}
\overline{T}\cong \mathbb{Z}_{d'(\veq^t-1)/d(\veq-1)} \times \mathbb{Z}_{(\veq^r-1)/d'}\times \mathbb{Z}_{(\veq^{n_j}-1)(\veq^{n_k}-1)/(\veq^r-1)}
\times \prod_{\substack{\mathclap{l=1,\ldots,s}\\ l\ne i,j,k}}\mathbb{Z}_{\veq^{n_l}-1}.
\end{equation}
\end{itemize}
\end{cor}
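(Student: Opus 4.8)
The plan is to obtain every item by specialising the decomposition \eqref{ovtdec} of Theorem~\ref{main}, using the freedom to reorder $n_1,\ldots,n_s$ (which fixes the $a_k$ via \eqref{aidef}) and to relabel $a_2,\ldots,a_s$ (which fixes the $b_k$ via \eqref{bidef}). I will repeatedly use the identities $(\veq^m-1,\veq^{m'}-1)=\veq^{(m,m')}-1$ and $[\veq^m-1,\veq^{m'}-1]=(\veq^m-1)(\veq^{m'}-1)/(\veq^{(m,m')}-1)$, together with the resulting \emph{coprime collapse} $\mathbb{Z}_{\veq^m-1}\times\mathbb{Z}_{\veq^{m'}-1}\cong\mathbb{Z}_{\veq^{(m,m')}-1}\times\mathbb{Z}_{[\veq^m-1,\veq^{m'}-1]}$. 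I also record that the $b_k$ of \eqref{bidef} satisfy $b_2\mid b_3\mid\cdots\mid b_s$, so $\mathbb{Z}_{b_2}$ is the smallest invariant factor of $\mathbb{Z}_{a_2}\times\cdots\times\mathbb{Z}_{a_s}$. Items $(i)$ and $(ii)$ then follow by direct substitution: in $(i)$ one has $t=n$ and $d'=1$, whence $b_1'=(\veq^n-1)/d(\veq-1)$; in $(ii)$ one has $b_2=a_2=[\veq^{n_1}-1,\veq^{n_2}-1]$, and the lcm identity turns $b_1'$ and $b_2'=b_2/d'$ into the two factors of \eqref{s2}.

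For $(iii)$–$(v)$ the hypotheses force $t=1$, hence $a_1=\veq-1$, $a_1'=1$ and $d'=(n,\veq-1)=d$; feeding this into \eqref{ovtdec} kills $b_1'=1$ and yields $\overline{T}\cong\mathbb{Z}_{b_2/d}\times\mathbb{Z}_{b_3}\times\cdots\times\mathbb{Z}_{b_s}$. In other words, $\overline{T}$ is just $T$ with its smallest invariant factor divided by $d$, so it suffices to exhibit $T$ with $\mathbb{Z}_{\veq-1}$ as smallest invariant factor. Corollaries~\ref{cmain} and~\ref{cmaing} provide exactly this: the $\operatorname{SL}$-torus acquires a single surviving factor $\mathbb{Z}_{\veq-1}$, while each coprime pair $n_j,n_k$ contributes a combined factor $\mathbb{Z}_{(\veq^{n_j}-1)(\veq^{n_k}-1)/(\veq-1)}$ via the coprime collapse. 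Thus $(iii)$ comes from two unit parts, $(iv)$ from one unit part and one coprime pair (Corollary~\ref{cmaing}), and $(v)$ from two coprime pairs (the multi-coprime generalisation noted after Corollary~\ref{cmaing}). In each case $\veq-1$ is indeed the smallest invariant factor, being itself a cyclic factor that divides all the others, so dividing it by $d$ is unambiguous and produces the stated form.

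For $(vi)$ one may have $t>1$, so the head $b_1'=d'(\veq^t-1)/d(\veq-1)$ survives automatically. Here I would place the part $n_i=t$ last. Since $t\mid n_l$ for all $l$, each tail gcd collapses to $t$, so $\veq^{(n_k,\ldots,n_s)}-1=\veq^t-1\mid\veq^{n_{k-1}}-1$ gives $a_k=\veq^{n_{k-1}}-1$ for $k\geqslant 2$ and hence $b_2=(a_2,\ldots,a_s)=\veq^{(n_1,\ldots,n_{s-1})}-1=\veq^r-1$. Corollary~\ref{cmain} (case $n_i'=1$) supplies the head $\mathbb{Z}_{(\veq^t-1)/(\veq-1)}$, and in $(vi.2)$ Corollary~\ref{cmaing} additionally collapses the pair $n_j,n_k$ with $(n_j,n_k)=r$ into $\mathbb{Z}_{\veq^r-1}\times\mathbb{Z}_{(\veq^{n_j}-1)(\veq^{n_k}-1)/(\veq^r-1)}$, using $[\veq^t-1,\veq^r-1]=\veq^r-1$. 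The $b_1',b_2'$ correction of \eqref{ovtdec} then replaces the pair $\bigl(\mathbb{Z}_{(\veq^t-1)/(\veq-1)},\mathbb{Z}_{\veq^r-1}\bigr)$ by $\bigl(\mathbb{Z}_{d'(\veq^t-1)/d(\veq-1)},\mathbb{Z}_{(\veq^r-1)/d'}\bigr)$, which is $(vi.1)$; in $(vi.2)$ the extra factor $\mathbb{Z}_{(\veq^{n_j}-1)(\veq^{n_k}-1)/(\veq^r-1)}$ is carried along unchanged. Taking all parts equal to $t$ specialises $(vi.1)$ to \eqref{pnieq}.

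The main obstacle is the bookkeeping of the centre rather than the arithmetic. Since $\overline{T}=T/Z$ with $Z\cong\mathbb{Z}_d$ sitting \emph{diagonally} among the cyclic factors $\mathbb{Z}_{\veq^{n_i}-1}$ of the ambient torus, and not inside any single factor, identifying the quotient with the claimed reduction genuinely depends on the $b_1',b_2'$ formula of Theorem~\ref{main}, that is, on Proposition~\ref{pdec}. Two points then need care. First, the neat products in the statement are in general \emph{not} the literal $b$-decomposition coming from one ordering---already for $2+3+2+3$ the raw output differs---so the identifications must be read as isomorphisms of finite abelian groups and checked prime-by-prime through invariant factors; the key point each time is that the factor to be divided ($\veq-1$ in $(iii)$–$(v)$, and $\veq^r-1$ together with the head $(\veq^t-1)/(\veq-1)$ in $(vi)$) is truly the relevant smallest invariant factor. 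Second, one must verify the divisibilities $d\mid\veq-1$ and $d'\mid\veq^r-1$ that make the reductions meaningful, and confirm that the two-factor $d'$-adjustment of $\mathbb{Z}_{(\veq^t-1)/(\veq-1)}\times\mathbb{Z}_{\veq^r-1}$ produces exactly the factors in $(vi)$; this last check, where the choice $d'=(n/t,\veq-1)$ does its work, is the crux.
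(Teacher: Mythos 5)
Your overall strategy --- specialising \eqref{ovtdec} by exploiting the freedom to order the $n_i$ and to relabel $a_2,\ldots,a_s$ --- is the same as the paper's, and your treatment of $(i)$, $(ii)$ and $(vi)$ essentially reproduces the paper's computation (the paper also places the part $n_i=t$ last, obtains $a_l=\veq^{n_{l-1}}-1$, and reads off $b_2=\veq^r-1$). Where you genuinely diverge is in $(iii)$--$(v)$. The paper chooses the relabelling of the $a_k$ so that the claimed decomposition is the \emph{literal} output of \eqref{bidef}: the factor to be divided is placed last, whence it becomes $b_2=(a_2,\ldots,a_s)$ and every other $b_k=[a_{k-1},(a_k,\ldots,a_s)]$ collapses to $a_{k-1}$, so no transfer between different cyclic decompositions is ever needed. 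You instead invoke an invariance principle: that ``dividing the smallest invariant factor by $d$'' is unambiguous across decompositions of $T$ into $s-1$ cyclic factors. That principle is true, but only under the hypothesis that the factor being divided has order equal to the gcd of all the factor orders (prime by prime, one copy of the minimal exponent is removed and lowered, an operation determined by the exponent multiset alone); without that hypothesis it fails, e.g.\ $\mathbb{Z}_2\times\mathbb{Z}_{12}\cong\mathbb{Z}_4\times\mathbb{Z}_6$, yet dividing the first factor by $2$ yields non-isomorphic groups. Since you do verify the gcd condition in each case, your route works, but this lemma is the crux of your argument and should be proved rather than asserted to be ``unambiguous''; the paper's relabelling trick buys you the same conclusion with no invariance argument at all.

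One auxiliary claim should be deleted: the $b_k$ of \eqref{bidef} do \emph{not} satisfy $b_2\mid b_3\mid\cdots\mid b_s$ in general. For $a_2=2$, $a_3=3$, $a_4=5$ one gets $b_2=1$, $b_3=2$, $b_4=15$, and $b_3\nmid b_4$; indeed the paper explicitly warns that the decomposition of Proposition \ref{pdec} is not canonical. What you actually use --- and what is true --- is only that $b_2=(a_2,\ldots,a_s)$ is the first determinant divisor and hence the smallest invariant factor of $\mathbb{Z}_{a_2}\times\cdots\times\mathbb{Z}_{a_s}$ relative to $s-1$ generators; $b_2$ does divide every other $b_k$, but the higher $b_k$ need not form a divisibility chain.
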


Observe that decompositions (\ref{s2}) and (\ref{pnieq}) can also be readily deduced from \cite{07ButGr.t}.

\medskip
\noindent{\bf Example 4.} Let $n=3+6+9+12$. Then $t=3$ and $r=3$. We may set $n_j=6$, $n_k=9$.
By (\ref{iv2}), the image of $T$ in $\operatorname{PSL}_{30}(\veq)$ is
$$
\overline{T}\cong \mathbb{Z}_{(\veq^2+\veq+1)/d_3}\times \mathbb{Z}_{(\veq^3-1)/d_{10}} \times \mathbb{Z}_{(\veq^9-1)(\veq^3+1)}
\times \mathbb{Z}_{\veq^{12}-1},
$$
where $d_3=(3,\veq-1)$ and $d_{10}=(10,\veq-1)$.

\medskip
In Table \ref{sut}, we give decompositions for all images $\overline{T}$ in $\operatorname{PSL_{10}}(\veq)$.
Most of them are consequences of Corollary \ref{corp}.

\section{A cyclic decomposition of finite abelian groups}

Let $m_1,\ldots,m_s\in\mathbb{N}$.
The direct product of cyclic groups
$$
A=\mathbb{Z}_{m_1}\times\ldots\times\mathbb{Z}_{m_s}
$$
has a cyclic direct factor of order $d_1=(m_1,\ldots,m_s)$. In other words, $A \cong \mathbb{Z}_{d_1}\times A'$
for an abelian group $A'$. We are interested in an explicit cyclic decomposition of~$A'$. One such decomposition can be
obtained canonically. We have
\begin{equation}\label{can}
A=\mathbb{Z}_{d_1}\times\ldots\times\mathbb{Z}_{d_s},
\end{equation}
where $d_k=\delta_k/\delta_{k-1}$, $k=1,\ldots,s$, and
\begin{equation}\label{dk}
\delta_k=\operatorname{gcd}\{m_{i_1}\cdot\ldots\cdot m_{i_k}\mid 1\leqslant i_1<\ldots<i_k\leqslant s\},
\end{equation}
$k=0,\ldots,s$, is the {\em $k$-th determinant divisor} of the matrix $\operatorname{diag}(m_1,\ldots,m_s)$, i.\,e.
the $\operatorname{gcd}$ of all its $k\times k$ minors. Clearly, (\ref{can}) provides a decomposition for $A'$:
$$
A'=\mathbb{Z}_{d_2}\times\ldots\times\mathbb{Z}_{d_s}.
$$
There are two alternative descriptions of the invariants $d_k$'s. The first one uses prime factorization.
Given\footnote{The notation $p^\mu \parallel m$ stands for the fact that the prime power $p^\mu$ {\em exactly divides}
$m$, i.\,e. $p^\mu\mid m$ and $p^{\mu+1}\nmid m$.}
a prime $p$, let
$p^{\mu_{p,i}}\parallel m_i$ for suitable $\mu_{p,i}\geqslant 0$, $i=1,\ldots,s$. Also, let
$\nu_{p,1}\leqslant\ldots\leqslant\nu_{p,s}$ be such that
$$
\{\{\,\nu_{p,1},\ldots,\nu_{p,s}\,\}\}=\{\{\,\mu_{p,1},\ldots,\mu_{p,s}\,\}\}
$$
is the equality of multisets (i.\,e. sets with repetitions). Then
\begin{equation}\label{dkp}
\delta_k=\prod_p p^{\nu_{p,1}+\ldots+\nu_{p,k}}, \qquad  d_k=\prod_p p^{\nu_{p,k}},
\end{equation}
$k=1,\ldots,s$, the products being taken over all primes. This readily follows from (\ref{dk}) and the fact that
$$
\operatorname{min}\{ \mu_{p,i_1}+\ldots +\mu_{p,i_k} \mid 1\leqslant i_1<\ldots<i_k\leqslant s\}= \nu_{p,1}+\ldots+\nu_{p,k}
$$
for every $p$. The second description is
\begin{equation}\label{dk2}
  d_k = \operatorname{lcm}\{ (m_{i_1},\ldots, m_{i_{s-k+1}})\mid 1\leqslant i_1<\ldots<i_{s-k+1}\leqslant s\},
\end{equation}
$k=1,\ldots,s$, which follows from (\ref{dkp}) and the fact that
$$
\operatorname{max}\{\operatorname{min}\{   \mu_{p,i_1},\ldots, \mu_{p,i_{s-k+1}}\}\mid 1\leqslant i_1<\ldots<i_{s-k+1}\leqslant s \}=\nu_{p,k}.
$$
for every $p$.

The canonical decomposition (\ref{can}) is explicit but is not computationally efficient because of
the combinatorial growth of the number of arguments on the right-hand side of both (\ref{dk}) and (\ref{dk2}), or
due to the dependence on prime factorization in (\ref{dkp}).

The proof of Theorem \ref{main} is based on the following fact which yields
an alternative cyclic decomposition for $A'$.

\begin{prop}\label{pdec} For $m_1,\ldots,m_s\in \mathbb{N}$, we have
\begin{equation}\label{mdec}
\mathbb{Z}_{m_1}\times\ldots\times\mathbb{Z}_{m_s}\cong \ \mathbb{Z}_{a_1}\times\ldots\times\mathbb{Z}_{a_s},
\end{equation}
where
\begin{align}
  a_1 & =(m_1,\ldots,m_s),  \nonumber \\
  a_2 & =[m_1,(m_2,\ldots,m_s)], \nonumber \\
  a_3 & =[m_2,(m_3,\ldots,m_s)], \nonumber \\
   & \vdots \label{ai} \\
  a_{s-1} & =[m_{s-2},(m_{s-1},m_s)], \nonumber \\
  a_s & =[m_{s-1},m_s]. \nonumber
\end{align}
\end{prop}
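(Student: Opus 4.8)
The plan is to prove the statement by induction on $s$, relying on the single classical fact that for any $m,n\in\mathbb{N}$ one has $\mathbb{Z}_m\times\mathbb{Z}_n\cong\mathbb{Z}_{(m,n)}\times\mathbb{Z}_{[m,n]}$, the elementary-divisor (Chinese remainder) identity for two cyclic groups, which is precisely the case $s=2$ of the proposition. The base case $s=1$ is trivial, since then $a_1=m_1$ and there is nothing to prove.

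For the inductive step I would first apply the proposition to the $s-1$ numbers $m_2,\ldots,m_s$. The crucial observation is that the defining list (\ref{ai}) is self-similar under deletion of its first argument: writing out the conclusion for $m_2,\ldots,m_s$, its leading invariant is $(m_2,\ldots,m_s)$, while its remaining invariants are exactly $[m_2,(m_3,\ldots,m_s)],\ldots,[m_{s-1},m_s]$, that is, $a_3,\ldots,a_s$ in the original labelling. Hence the inductive hypothesis gives $\mathbb{Z}_{m_2}\times\cdots\times\mathbb{Z}_{m_s}\cong\mathbb{Z}_{(m_2,\ldots,m_s)}\times\mathbb{Z}_{a_3}\times\cdots\times\mathbb{Z}_{a_s}$.

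I would then multiply by $\mathbb{Z}_{m_1}$, leave the factors $\mathbb{Z}_{a_3},\ldots,\mathbb{Z}_{a_s}$ untouched, and apply the $s=2$ identity to the single pair $\mathbb{Z}_{m_1}\times\mathbb{Z}_{(m_2,\ldots,m_s)}$. Since $(m_1,(m_2,\ldots,m_s))=(m_1,m_2,\ldots,m_s)=a_1$ and $[m_1,(m_2,\ldots,m_s)]=a_2$, that pair becomes $\mathbb{Z}_{a_1}\times\mathbb{Z}_{a_2}$; reassembling and using commutativity of the direct product to reorder factors yields $\mathbb{Z}_{m_1}\times\cdots\times\mathbb{Z}_{m_s}\cong\mathbb{Z}_{a_1}\times\cdots\times\mathbb{Z}_{a_s}$, as required.

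The only genuine work, and the step I expect to be the main obstacle, is the bookkeeping that verifies this self-similarity: one must confirm that the recursion encoded in (\ref{ai}) reproduces itself when the first argument is removed, so that the inductive hypothesis slots in with all indices shifted by one and the leftover gcd factor $(m_2,\ldots,m_s)$ is exactly the input that the final $s=2$ step consumes. For completeness I would also record an alternative, purely computational route: decomposing both sides into their $p$-primary components reduces the claim to the multiset identity $\{v_p(a_1),\ldots,v_p(a_s)\}=\{v_p(m_1),\ldots,v_p(m_s)\}$ for every prime $p$; writing $\mu_i=v_p(m_i)$ and using that $v_p$ of a gcd is the minimum and $v_p$ of an lcm is the maximum of valuations, one checks this by comparing, for each threshold $t$, the number of indices whose value is at least $t$ on either side. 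I expect the induction to be the shorter and more transparent of the two arguments.
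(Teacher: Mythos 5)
Your induction is correct, and it is a genuinely different route from the one the paper takes. The paper fixes a prime $p$, writes $p^{\mu_i}\parallel m_i$ and $p^{\alpha_i}\parallel a_i$, observes that $\alpha_i=\max\{\mu_{i-1},\min\{\mu_i,\ldots,\mu_s\}\}$, and then exhibits an explicit bijection between the multisets $\{\{\mu_1,\ldots,\mu_s\}\}$ and $\{\{\alpha_1,\ldots,\alpha_s\}\}$ built from the successive positions where the running minimum of the $\mu_i$'s is attained; this is essentially the second, ``purely computational'' route you sketch at the end, except that the paper replaces your threshold-counting by a concrete matching of indices. The author explicitly remarks after the proof that the proposition ``might as well have been proven by induction on $s$ without using prime factorization,'' which is exactly your argument. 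The self-similarity step you flag as the main obstacle is in fact immediate: deleting $m_1$ from the list \eqref{ai} shifts every invariant after the first down by one, the leading invariant becomes $(m_2,\ldots,m_s)$, and the identities $(m_1,(m_2,\ldots,m_s))=a_1$ and $[m_1,(m_2,\ldots,m_s)]=a_2$ let the two-factor identity $\mathbb{Z}_m\times\mathbb{Z}_n\cong\mathbb{Z}_{(m,n)}\times\mathbb{Z}_{[m,n]}$ absorb the leftover gcd factor, so your inductive step closes cleanly. What each approach buys: yours is shorter, more elementary, and avoids prime factorization entirely; the paper's valuation bookkeeping reuses notation already set up for the canonical decomposition via determinant divisors in \eqref{dkp} and \eqref{dk2}, and its explicit bijection makes visible exactly which $\mu_i$ each $\alpha_j$ equals, which is mildly informative but not needed for the statement itself.
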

\begin{proof}
Fix a prime $p$. Let $p^{\mu_i}\parallel m_i$ and let $p^{\alpha_i}\parallel a_i$, $i=1,\ldots,s$. Set $\mu_0=0$. By~(\ref{ai}), we have
\begin{equation} \label{ali}
  \alpha_i = \operatorname{max}\{\mu_{i-1},  \operatorname{min}\{\mu_i,\ldots,\mu_s\}  \}
\end{equation}
for $i=1,\ldots,s$. It suffices to show that the equality of multisets $\{\{\mu_1,\ldots,\mu_s\}\}=\{\{\alpha_1,\ldots,\alpha_s\}\}$ holds.
Define $i_0,i_1,\ldots$ by the rule $i_0=0$ and $i_k$ is such that
\begin{equation} \label{nui}
  \mu_{i_k} = \operatorname{min}\{\mu_{i_{k-1}+1},\mu_{i_{k-1}+2},\ldots,\mu_s\}
\end{equation}
for $k=1,\ldots,l$, where $l$ is the smallest index with $\mu_{i_l}=\mu_s$. An explicit bijection between
$\mu_1,\ldots,\mu_s$ and $\alpha_1,\ldots,\alpha_s$ is then given as follows.
For every $k=0,\ldots, l-1$, we have
by (\ref{ali}) and (\ref{nui})
$$
\alpha_{i_k+1}=\operatorname{max}\{\mu_{i_k},\operatorname{min}\{\mu_{i_k+1},\ldots,\mu_s\}\}=
\operatorname{max}\{\mu_{i_k},\mu_{i_{k+1}}\}=\mu_{i_{k+1}}
$$
and
$$
\alpha_i = \operatorname{max}\{\mu_{i-1},  \operatorname{min}\{\mu_i,\ldots,\mu_s\}\}=
\operatorname{max}\{\mu_{i-1}, \mu_{i_{k+1}}\}=\mu_{i-1},
$$
where $i_k+2\leqslant i\leqslant i_{k+1}$. The claim follows.
\end{proof}

Observe that we might as well have proven Proposition \ref{pdec} by induction on $s$ without using prime factorization.
Also, we emphasize that in general the decomposition on the right-hand side of (\ref{mdec}) is not canonical and may essentially depend on
the ordering of $m_i$'s.

\section{Auxiliary  facts}

We state explicitly the necessary facts form \cite{07ButGr.t} slightly modifying
the original notation.

\begin{thm}[\mbox{\cite[Theorems 2.1, 2.2]{07ButGr.t}}]\label{bg}
Let $n \geqslant 2$ and let $T$ be a maximal torus of $\operatorname{SL}_n(\veq)$ parameterized by the
partition $n=n_1+\ldots +n_s$. For $k=1,\ldots,s$, denote
\begin{equation}\label{dkbg}
d_k=\operatorname{lcm}\{(\veq^{n_{i_1}}-1,\ldots,\veq^{n_{i_{s-k+1}}}-1)\mid 1\leqslant i_1<\ldots<i_{s-k+1}\leqslant s \}.
\end{equation}
Then $d_k$ divides $d_{k'}$ for $k\leqslant k'$ and
$$
T\cong \mathbb{Z}_{d_1'}\times \mathbb{Z}_{d_2}\times\ldots\times \mathbb{Z}_{d_s},
$$
where $d_1'=d_1/(\veq-1)$. Let $\overline{T}$ be the image of $T$ in $\operatorname{PSL}_n(\veq)$.
Denote
$$
d=(n,\veq-1),\qquad d'=\left(n/(n_1,\ldots,n_s),\veq-1\right).
$$
If $s=1$ then $\overline{T}\cong \mathbb{Z}_{d_1'}$, where $d_1'=d_1/d(\veq-1)$. If $s>1$ then
$$
\overline{T}\cong \mathbb{Z}_{d_1'}\times \mathbb{Z}_{d_2'}\times \mathbb{Z}_{d_3}\times\ldots \times\mathbb{Z}_{d_s},
$$
where $d_1'=d'd_1/d(\veq-1)$ and $d_2'=d_2/d'$.
\end{thm}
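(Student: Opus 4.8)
The plan is to derive Theorem \ref{bg} from the structure of maximal tori in the ambient groups $\operatorname{GL}_n(\veq)$ (that is, $\operatorname{GL}_n(q)$ or $\operatorname{GU}_n(q)$) together with the determinant-divisor machinery of Section~2. First I would recall that a maximal torus $\widetilde T$ of $\operatorname{GL}_n(\veq)$ corresponding to $n=n_1+\ldots+n_s$ decomposes as $\widetilde T=C_1\times\ldots\times C_s$, where $C_i$ is the multiplicative group of the degree-$n_i$ extension attached to the $i$-th block, so $C_i\cong\mathbb{Z}_{\veq^{n_i}-1}$. Writing $m_i=\veq^{n_i}-1$ and applying the discussion of Section~2 to $m_1,\ldots,m_s$, formula (\ref{dk2}) shows that the determinant divisors of $\operatorname{diag}(m_1,\ldots,m_s)$ are exactly the numbers $d_k$ of (\ref{dkbg}). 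Hence $\widetilde T\cong\mathbb{Z}_{d_1}\times\ldots\times\mathbb{Z}_{d_s}$ is already in invariant-factor form, and the divisibility $d_k\mid d_{k'}$ for $k\leqslant k'$ is immediate.

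Next I would treat $T=\widetilde T\cap\operatorname{SL}_n(\veq)=\ker(\det|_{\widetilde T})$. The crucial observation is that on each factor $C_i$ the determinant is the field norm $N_i\colon C_i\to\mathbb{Z}_{\veq-1}$, which is surjective; therefore $\det=\prod_i N_i$ is onto $\mathbb{Z}_{\veq-1}$ and $[\widetilde T:T]=e$, where $e=|\veq-1|$. Choosing generators $g_i$ of $C_i$ with all $N_i(g_i)$ equal to a fixed generator of $\mathbb{Z}_e$ identifies $\det$ with the map $(x_1,\ldots,x_s)\mapsto x_1+\ldots+x_s\bmod e$, so that $T\cong L/M\mathbb{Z}^s$ with $M=\operatorname{diag}(m_1,\ldots,m_s)$ and $L=\{x\in\mathbb{Z}^s:x_1+\ldots+x_s\equiv 0\bmod e\}$. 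In the basis $e\,e_1,\ e_2-e_1,\ldots,e_s-e_1$ of $L$ the relations $m_ie_i$ yield an explicit $s\times s$ presentation matrix, namely one with top row $(m_1/e,\ldots,m_s/e)$ and $\operatorname{diag}(m_2,\ldots,m_s)$ in the remaining rows, whose determinant divisors I would compute prime-by-prime exactly as in Section~2. A short count of its $k\times k$ minors gives $\delta_k(T)=\delta_k(\widetilde T)/e$ for every $k$, so the invariant factors of $T$ are $d_1/e,\,d_2,\ldots,d_s$: passing to $\operatorname{SL}$ divides only the smallest invariant factor by $\veq-1$, which is the asserted decomposition of $T$.

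For the projective image $\overline T=T/Z$ I would identify the centre $Z=Z(\operatorname{SL}_n(\veq))$ with the scalar matrices of determinant $1$, a cyclic group of order $d=(n,\veq-1)$ contained in the full scalar subgroup $Z_0\cong\mathbb{Z}_e$ of $\widetilde T$. Working in coordinates adapted to the scalars, with generators $g_i$ chosen so that $g_i^{m_i/e}$ equals one fixed scalar generator, gives $Z_0=\langle w\rangle$ for $w=(m_1/e,\ldots,m_s/e)$; the same determinant-divisor computation applied to the augmented presentation $M\mathbb{Z}^s+\mathbb{Z}w$ shows that $\widehat T:=\widetilde T/Z_0$ again has invariant factors $d_1/e,\,d_2,\ldots,d_s$. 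Since $\det(Z_0)$ corresponds to the subgroup $\langle n\rangle=\langle d\rangle\leqslant\mathbb{Z}_e$, the determinant induces a surjection $\widehat{\det}\colon\widehat T\to\mathbb{Z}_e/\langle d\rangle\cong\mathbb{Z}_d$ whose kernel is exactly $\overline T$. Computing $\overline T$ is thus formally the same problem as the $\operatorname{SL}$ case, but with $\widehat T$ in place of $\widetilde T$ and $\mathbb{Z}_d$ in place of $\mathbb{Z}_e$.

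The main obstacle is this final kernel computation. In contrast to the $\operatorname{SL}$ case, $\widehat{\det}$ need not be surjective on the smallest cyclic factor of $\widehat T$, so the reduction by $d$ spreads over the two smallest factors and the bookkeeping is controlled by $d'=(n/t,\veq-1)$ with $t=(n_1,\ldots,n_s)$ rather than by $d$ alone. Tracking the image of $\widehat{\det}$ shows that the first factor, of order $d_1/e=(\veq^t-1)/e$, accounts for an index $d/d'$ while the second accounts for the remaining index $d'$; carrying this through the determinant-divisor computation yields smallest invariant factor $d'd_1/de$ and second factor $d_2/d'$, all others unchanged, which is the stated decomposition of $\overline T$. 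The degenerate case $s=1$, where $d'=1$ and $\widehat{\det}$ is onto the first factor, recovers $\overline T\cong\mathbb{Z}_{d_1/de}$. The genuinely delicate point is pinning down how the norm data meets the two smallest determinant divisors modulo $d$, i.e.\ determining the residues modulo $d$ of the coordinate form of the determinant in the scalar-adapted basis.
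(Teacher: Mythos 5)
You should first note that the paper itself contains no proof of Theorem \ref{bg}: it is quoted from \cite[Theorems 2.1, 2.2]{07ButGr.t} as an auxiliary fact and used as input everywhere else, so your attempt can only be judged on its own merits. Your treatment of the $\operatorname{SL}$ half is essentially a correct and complete argument. Granting the standard facts that the corresponding torus $\widetilde T$ of $\operatorname{GL}_n(\veq)$ is $\mathbb{Z}_{m_1}\times\ldots\times\mathbb{Z}_{m_s}$ with $m_i=\veq^{n_i}-1$, and that $\det$ restricted to each factor maps onto $\mathbb{Z}_{\veq-1}$ (in the unitary case this deserves a word of care: the factor is not literally the multiplicative group of a field extension and $\det$ is a twisted norm, though surjectivity is standard), the identification $T\cong L/M\mathbb{Z}^s$ with $L=\{x\in\mathbb{Z}^s : x_1+\ldots+x_s\equiv 0 \bmod e\}$, $e=|\veq-1|$, is right, and your minor count does work: in the basis $e\,e_1,\,e_2-e_1,\ldots,e_s-e_1$ every nonzero $k\times k$ minor of the presentation matrix equals, up to sign, $\bigl(\prod_{i\in S}m_i\bigr)/e$ for a $k$-element subset $S\subseteq\{1,\ldots,s\}$, and every such value occurs; hence $\delta_k(T)=\delta_k(\widetilde T)/e$ for all $k$, which gives $T\cong\mathbb{Z}_{d_1/(\veq-1)}\times\mathbb{Z}_{d_2}\times\ldots\times\mathbb{Z}_{d_s}$, and $d_k\mid d_{k'}$ for $k\leqslant k'$ is immediate since the $d_k$ are invariant factors.

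The projective half, however, has a genuine gap, and it sits exactly where the content of that half lies. Your setup is sound: $\overline T$ is the kernel of the induced map $\widehat{\det}\colon\widetilde T/Z_0\to\mathbb{Z}_e/\det(Z_0)\cong\mathbb{Z}_d$, and the claim that $\widetilde T/Z_0$ again has invariant factors $d_1/e,d_2,\ldots,d_s$ is true (the augmented presentation has the same minors as before). But the decisive statement --- that passing to this kernel of index $d$ divides the first invariant factor by $d/d'$ and the second by $d'$, where $d'=(n/t,\veq-1)$ and $t=(n_1,\ldots,n_s)$ --- is only asserted (``tracking the image of $\widehat{\det}$ shows\ldots''), and you concede in your final sentence that the required computation is not done. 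Nowhere in the proposal is $d'$ actually derived; it is named but never produced by any calculation, and it cannot be read off from the shape of your matrices without further number-theoretic input. To close the gap one must compute the Smith form of an explicit presentation of $\overline T$, for instance $\overline T\cong L_d/(M\mathbb{Z}^s+\mathbb{Z}w)$ where $L_d=\{x : x_1+\ldots+x_s\equiv 0\bmod d\}$ and $w=(m_1/e,\ldots,m_s/e)$ (here $L+\mathbb{Z}w=L_d$ because $m_i/e\equiv n_i\pmod e$, so $\sum_i m_i/e\equiv n\pmod e$), and then verify that its invariant factors are precisely $d'd_1/d(\veq-1)$, $d_2/d'$, $d_3,\ldots,d_s$; it is in this computation, via the congruences $m_i/e\equiv n_i\pmod e$ and the definition of $t$, that $d'$ first appears. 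Until that is carried out, only the $\operatorname{SL}$ statement of the theorem is proved.
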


The following number-theoretic result will also be used.

\begin{lem}\label{eqab} Let $a,b,q\in \mathbb{N}$, let $\varepsilon=\pm 1$, and let $\veq=\varepsilon q$.
Then up to sign we have
$$
(\veq^a-1,\veq^b-1)=\veq^{(a,b)}-1.
$$
\end{lem}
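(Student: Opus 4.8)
The plan is to prove the identity $(\veq^a-1,\veq^b-1)=\veq^{(a,b)}-1$ (up to sign) by reducing to the classical statement for genuine positive integers and then tracking the sign bookkeeping introduced by $\varepsilon=\pm1$. First I would recall the well-known fact that for an integer $x>1$ one has $(x^a-1,x^b-1)=x^{(a,b)}-1$; the standard argument runs the Euclidean algorithm on the exponents, using that $x^a-1$ and $x^b-1$ obey $(x^a-1)-x^{a-b}(x^b-1)=x^{a-b}-1$, so the gcd is stable under replacing $(a,b)$ by $(a-b,b)$ and descends to $(a,b)$. This handles the case $\varepsilon=+1$ directly, where $\veq=q$ and everything is a positive integer.

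The only real content beyond the classical case is the sign, so the hard part will be the case $\varepsilon=-1$, where $\veq=-q$ and the quantities $\veq^a-1=(-1)^aq^a-1$ can be negative. Here I would split according to the parities of $a$ and $b$. Writing $\veq^a-1=(-1)^a q^a-1$, the expression equals $q^a-1$ when $a$ is even and $-(q^a+1)$ when $a$ is odd; since gcd is taken up to sign, I may replace each factor by its absolute value $|q^a-(-1)^a|$. The cleanest unified route is to observe that $\veq^{2a}-1=q^{2a}-1$ regardless of $\varepsilon$, so I can pass to the even powers and write, for instance, $q^a+1=(q^{2a}-1)/(q^a-1)$, reducing each of the four parity cases to gcds of ordinary $q^m-1$ expressions that the classical identity resolves.

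Concretely, I expect the argument to reduce to verifying that $\bigl||q^a-(-1)^a|,\,|q^b-(-1)^b|\bigr|=q^{(a,b)}-(-1)^{(a,b)}$, matching the parity of $(a,b)$ to the parities of $a$ and $b$: if both $a,b$ are odd then $(a,b)$ is odd, and one checks $(q^a+1,q^b+1)=q^{(a,b)}+1$; if at least one of $a,b$ is even then $(a,b)$ is even and the answer is $q^{(a,b)}-1$, which the factor $q^{2m}-1$ trick produces. Each of these elementary gcd identities follows from the classical positive-integer case applied to $q$ together with the factorization $q^{2m}-1=(q^m-1)(q^m+1)$ and the coprimality considerations that separate the two factors. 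The main obstacle is purely the parity case analysis for $\varepsilon=-1$; once the observation that $\veq^{(a,b)}-1$ carries exactly the sign $(-1)^{(a,b)}$ matching the combined parity is made, the verification in each case is routine.
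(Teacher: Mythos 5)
Your overall strategy --- reduce to the classical identity $(q^a-1,q^b-1)=q^{(a,b)}-1$ and track signs by a parity case analysis for $\varepsilon=-1$ --- is exactly the shape of the paper's proof, which simply quotes the four parity cases from \cite[Lemma 6(iii)]{04Zav}. However, your case analysis contains a genuine error in the mixed-parity case. You assert that ``if at least one of $a,b$ is even then $(a,b)$ is even and the answer is $q^{(a,b)}-1$.'' If exactly one of $a,b$ is even, say $a$ even and $b$ odd, then $(a,b)$ divides $b$ and is therefore \emph{odd}, and the correct value is $q^{(a,b)}+1$, not $q^{(a,b)}-1$ (these are the paper's second and third cases: $(q^a-1,q^b+1)=q^{(a,b)}+1$). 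A concrete counterexample to your claim: $q=2$, $\varepsilon=-1$, $a=2$, $b=1$ gives $|\veq^{2}-1|=3$ and $|\veq^{1}-1|=3$, so the gcd is $3=q^{(a,b)}+1$, whereas your formula predicts $q^{(a,b)}-1=1$. Note that this contradicts the unified target $q^{(a,b)}-(-1)^{(a,b)}$ that you yourself state correctly one sentence earlier; the slip is in translating that target into cases, and the fix is to treat ``exactly one even'' as a separate case with an odd gcd of exponents.

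A secondary point: even with the cases sorted correctly, the identities $(q^a-1,q^b+1)=q^{(a,b)}+1$ and $(q^a+1,q^b+1)=q^{(a,b)}+1$ (for the relevant parities) do not fall out of the classical identity plus the factorization $q^{2m}-1=(q^m-1)(q^m+1)$ quite as routinely as you suggest: passing to $(q^{2a}-1,q^{2b}-1)=q^{2(a,b)}-1$ only bounds the gcd by $(q^{(a,b)}-1)(q^{(a,b)}+1)$, and one must still rule out a stray contribution (at least a factor of $2$) from the $q^{(a,b)}-1$ part when $q$ is odd. This is standard but needs an explicit argument --- or a citation, which is what the paper does.
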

\begin{proof} We use \cite[Lemma 6(iii)]{04Zav}. If either $\varepsilon=1$ or both $a,b$ even, we have
$$
(q^a-1,q^b-1)=q^{(a,b)}-1.
$$
If $\varepsilon=-1$, $a$ even, $b$ odd, we have
$$
(q^a-1,q^b+1)=q^{(a,b)}+1.
$$
If $\varepsilon=-1$, $a$ odd, $b$ even, we have
$$
(q^a+1,q^b-1)=q^{(a,b)}+1.
$$
If $\varepsilon=-1$, $a$ odd, $b$ odd, we have
$$
(q^a+1,q^b+1)=q^{(a,b)}+1.
$$
The claim follows.
\end{proof}

\section{Proof of main results}

We first prove Theorem \ref{main}.
\begin{proof} We may assume $n\geqslant 2$. Denote $m_i=\veq^{n_i}-1$, $i=1,\ldots, s$, and $d_1=(m_1,\ldots,m_s)$. Let
$$
A=\mathbb{Z}_{m_1}\times \ldots \times \mathbb{Z}_{m_s}.
$$
and $A_1=\mathbb{Z}_{d_1}$. By (\ref{can}), we have $A\cong A_1\times A'$, where
$$
A'=\mathbb{Z}_{d_2}\times \ldots \times \mathbb{Z}_{d_s}
$$
and the $d_k$'s are given by equations (\ref{dk2}) which are clearly the same as~(\ref{dkbg}). In particular,
Theorem \ref{bg} implies $T\cong A_1/A_1'\times A'$, where $A_1'\cong \mathbb{Z}_{\veq-1}$ is a cyclic subgroup of $A_1$.
On the other hand, Proposition \ref{pdec} yields
\begin{equation}\label{app}
A'\cong \mathbb{Z}_{a_2}\times \ldots \times \mathbb{Z}_{a_s}
\end{equation}
and the $a_k$'s are given by (\ref{ai}). Lemma \ref{eqab} implies that the $a_k$'s are the same as defined in (\ref{aidef}).
Hence the required decomposition (\ref{tdec}) holds.

We now consider the image $\overline{T}$ of $T$ in $\operatorname{PSL}_n(\veq)$. We may assume $s>1$.
The argument is similar except that we now consider $A'$ in place of $A$. We have
$$
A\cong A_1\times A_2\times A'',
$$
where $A_2=\mathbb{Z}_{d_2}$. Theorem \ref{bg} implies $\overline{T}\cong A/(A_1''\times A_2'')$, where
$A_1''\cong \mathbb{Z}_{d(\veq-1)/d'}$ and $A_2''\cong \mathbb{Z}_{d'}$  are cyclic subgroups of $A_1$ and $A_2$, respectively.
Since $A_2\times A''\cong A'$, decomposition (\ref{app}) implies $d_2=(a_2,\ldots,a_s)$. Moreover, Proposition~\ref{pdec}
with $a_2,\ldots,a_s$ playing the role of $m_1,\ldots,m_s$ and ordered arbitrarily yields
$$
A''\cong \mathbb{Z}_{b_3}\times \ldots \times \mathbb{Z}_{b_s},
$$
where the $b_i$'s are the same as defined in (\ref{bidef}). Therefore, we have the required decomposition (\ref{ovtdec}) for $\overline{T}$.
\end{proof}

We now prove Corollary \ref{cmain}.
\begin{proof} First, let us make no assumptions on the components $n_i$'s. In the notation of Theorem \ref{main}, define
\begin{equation}\label{ani}
A_{n_1,\ldots,n_s}(\veq)=\mathbb{Z}_{a_2}\times\ldots\times \mathbb{Z}_{a_s}.
\end{equation}
Then (\ref{tdec}) implies
\begin{equation}\label{ann}
T\cong \mathbb{Z}_{a_1'}\times A_{n_1,\ldots,n_s}(\veq).
\end{equation}
Setting $t=(n_1,\ldots,n_s)$ we can rewrite (\ref{ann}) as
\begin{equation}\label{tan}
T\cong \mathbb{Z}_{(\veq^t-1)/(\veq-1)}\times A_{n_1',\ldots,n_s'}(\veq^t),
\end{equation}
where $n_i=tn_i'$ for $i=1,\ldots,s$.

Now let $(n_i,n_j)=1$ for distinct $i,j$. Since the ordering of $n_i$'s is not fixed, we may assume that
$\{i,j\}=\{s-1,s\}$ (although we could proceed without this assumption). Then (\ref{aidef}) implies that $a_1=\veq-1$,
$a_2=\veq^{n_1}-1$, \ldots, $a_{s-1}=\veq^{n_{s-2}}-1$, and $a_s=[\veq^{n_{s-1}}-1,\veq^{n_s}]=(\veq^{n_i}-1)(\veq^{n_j}-1)/(\veq-1)$.
Therefore, Theorem \ref{main} and expression (\ref{ani}) imply
\begin{equation}\label{tnij}
A_{n_1,\ldots,n_s}(\veq ) \cong  \mathbb{Z}_{(\veq^{n_i}-1)(\veq^{n_j}-1)/(\veq-1)} \times \prod_{\substack{\mathclap{k=1,\ldots,s} \\ k\ne i,j}}\mathbb{Z}_{\veq^{n_k}-1}.
\end{equation}

Finally, if $(n_i',n_j')=1$ for distinct $i,j$ then both (\ref{tan}) and (\ref{tnij}) yield the required decomposition (\ref{ninj}).
\end{proof}

Corollary \ref{cmaing} can be proved similarly. We also outline a proof of Corollary \ref{corp}.

\begin{proof} Items $(i)$ and $(ii)$ are straightforward. They also readily follow from Theorem~\ref{bg}.
We show $(v)$. The proof of $(iii)$ and $(iv)$ is similar and simpler. As above we may assume that $\{i,j\}=\{s-1,s\}$ to
obtain $a_1=\veq-1$, $a_2=\veq^{n_1}-1$, \ldots, $a_{s-1}=\veq^{n_{s-2}}-1$, and $a_s=[\veq^{n_{s-1}}-1,\veq^{n_s}]=(\veq^{n_i}-1)(\veq^{n_j}-1)/(\veq-1)$.
Now, we may also assume $\{k,l\}=\{s-3,s-2\}$ and relabel the last three $a_i$'s so that
$a_{s-2}=(\veq^{n_i}-1)(\veq^{n_j}-1)/(\veq-1)$, $a_{s-1}=\veq^{n_{s-3}}-1$, $a_s=\veq^{n_{s-2}}-1$. This does not affect
the validity of isomorphism (\ref{ovtdec}), since we did not assume that the ordering of $a_2,\ldots, a_s$ is fixed when proving Theorem \ref{main}.
Thus, (\ref{bidef}) implies $b_2=\veq-1$, $b_3=a_2=\veq^{n_1}-1$, \ldots, $b_{s-2}=a_{s-3}=\veq^{n_{s-4}}-1$, $b_{s-1}=a_{s-2}=(\veq^{n_i}-1)(\veq^{n_j}-1)/(\veq-1)$,
$b_s=[a_{s-1},a_s]=[\veq^{n_{s-3}}-1,\veq^{n_{s-2}}-1]=(\veq^{n_k}-1)(\veq^{n_l}-1)/(\veq-1)$. The claim now follows by (\ref{ovtdec}) because $d=d'$ in this case.

We now prove $(vi.2)$. The proof of $(vi.1)$ is similar.
We may assume that $i=s$. Then $a_1=\veq^t-1$,
$a_l=\veq^{n_{l-1}}-1$, $l=2,\ldots,s$. Also, assume that $\{j,k\}=\{s-2,s-1\}$.
Then $b_2=\veq^r-1$, $b_3=a_2$, \ldots, $b_{s-1}=a_{s-2}$, $b_s=(\veq^{n_i}-1)(\veq^{n_j}-1)/(\veq^r-1)$. The claim now follows by (\ref{ovtdec}).
\end{proof}

\bigskip

\captionsetup{width=.9\textwidth}
\begin{longtable}[htb]{l@{\hspace{7.5pt}}l@{\hspace{7.5pt}}l}
\caption{The maximal tori of $\operatorname{SL}_{10}(\protect\veq)$ and their images in $\operatorname{PSL}_{10}(\protect\veq)$. \\
Notation: $\protect\veq=\pm q$, $d_2=(2,\protect\veq-1)$, $d_5=(5,\protect\veq-1)$, $d=d_2d_5=(10,\protect\veq-1)$.\label{sut}}\\
$[n_1,...,n_s]$ & $\operatorname{SL}_{10}(\veq)$  & $\operatorname{PSL}_{10}(\veq)\vphantom{p_{p_{p_p}}}$  \\
\hline
$[ 1^{10} ]$      &$\mathbb{Z}_{\veq-1}^{\,9}\vphantom{A^{A^A}}$                                      &$\mathbb{Z}_{\veq-1}^{\,8}\times\mathbb{Z}_{(\veq-1)/d}$                                          \\[1.9pt]
$[ 2, 1^8]$       &$\mathbb{Z}_{\veq^2-1}\times\mathbb{Z}_{\veq-1}^{\,7}$                                &$\mathbb{Z}_{\veq^2-1}\times\mathbb{Z}_{\veq-1}^{\,6}\times\mathbb{Z}_{(\veq-1)/d}$                  \\[1.9pt]
$[ 2^2, 1^6 ]$    &$\mathbb{Z}_{\veq^2-1}^{\,2}\times\mathbb{Z}_{\veq-1}^{\,5}$                          &$\mathbb{Z}_{\veq^2-1}^{\,2}\times\mathbb{Z}_{\veq-1}^{\,4}\times\mathbb{Z}_{(\veq-1)/d}$            \\[1.9pt]
$[ 2^3, 1^4 ]$    &$\mathbb{Z}_{\veq^2-1}^{\,3}\times\mathbb{Z}_{\veq-1}^{\,3}$                          &$\mathbb{Z}_{\veq^2-1}^{\,3}\times\mathbb{Z}_{\veq-1}^{\,2}\times\mathbb{Z}_{(\veq-1)/d}$            \\[1.9pt]
$[ 2^4, 1^2  ]$   &$\mathbb{Z}_{\veq^2-1}^{\,4}\times\mathbb{Z}_{\veq-1}$                                &$\mathbb{Z}_{\veq^2-1}^{\,4}\times\mathbb{Z}_{(\veq-1)/d}$                                        \\[1.9pt]
$[ 2^5 ]$         &$\mathbb{Z}_{\veq^2-1}^{\,4}\times\mathbb{Z}_{\veq+1}$                                &$\mathbb{Z}_{\veq^2-1}^{\,3}\times\mathbb{Z}_{(\veq^2-1)/d_5}\times\mathbb{Z}_{(\veq+1)/d_2}$                                 \\[1.9pt]
$[ 3, 1^7 ]$      &$\mathbb{Z}_{\veq^3-1}\times\mathbb{Z}_{\veq-1}^{\,6}$                                &$\mathbb{Z}_{\veq^3-1}\times\mathbb{Z}_{\veq-1}^{\,5}\times\mathbb{Z}_{(\veq-1)/d}$                               \\[1.9pt]
$[ 3, 2, 1^5]$    &$\mathbb{Z}_{\veq^3-1}\times\mathbb{Z}_{\veq^2-1}\times\mathbb{Z}_{\veq-1}^{\,4}$        &$\mathbb{Z}_{\veq^3-1}\times\mathbb{Z}_{\veq^2-1}\times\mathbb{Z}_{\veq-1}^{\,3}\times\mathbb{Z}_{(\veq-1)/d}$      \\[1.9pt]
$[ 3,  2^2, 1^3]$ &$\mathbb{Z}_{\veq^3-1}\times\mathbb{Z}_{\veq^2-1}^{\,2}\times\mathbb{Z}_{\veq-1}^{\,2}$  &$\mathbb{Z}_{\veq^3-1}\times\mathbb{Z}_{\veq^2-1}^{\,2}\times\mathbb{Z}_{\veq-1}\times\mathbb{Z}_{(\veq-1)/d}$      \\[1.9pt]
$[ 3, 2^3, 1]$    &$\mathbb{Z}_{\veq^3-1}\times\mathbb{Z}_{\veq^2-1}^{\,3}$                              &$\mathbb{Z}_{(\veq^3-1)(\veq+1)}\times\mathbb{Z}_{\veq^2-1}^{\,2}\times\mathbb{Z}_{(\veq-1)/d}$                               \\[1.9pt]
$[ 3^2, 1^4 ]$    &$\mathbb{Z}_{\veq^3-1}^{\,2}\times\mathbb{Z}_{\veq-1}^{\,3}$                          &$\mathbb{Z}_{\veq^3-1}^{\,2}\times\mathbb{Z}_{\veq-1}^{\,2}\times\mathbb{Z}_{(\veq-1)/d}$                              \\[1.9pt]
$[ 3^2, 2, 1^2 ]$ &$\mathbb{Z}_{\veq^3-1}^{\,2}\times\mathbb{Z}_{\veq^2-1}\times\mathbb{Z}_{\veq-1}$        &$\mathbb{Z}_{\veq^3-1}^{\,2}\times\mathbb{Z}_{\veq^2-1}\times\mathbb{Z}_{(\veq-1)/d}$            \\[1.9pt]
$[ 3^2, 2^2 ]$    &$\mathbb{Z}_{(\veq^3-1)(\veq+1)}\!\times\mathbb{Z}_{\veq^3-1}\!\times\mathbb{Z}_{\veq^2-1}$     &$\mathbb{Z}_{(\veq^3-1)(\veq+1)}^2\times\mathbb{Z}_{(\veq-1)/d}$     \\[1.9pt]
$[ 3^3, 1 ]$      &$\mathbb{Z}_{\veq^3-1}^{\,3}$                                                      &$\mathbb{Z}_{\veq^3-1}^{\,2}\times\mathbb{Z}_{(\veq^3-1)/d}$                                                       \\[1.9pt]
$[ 4, 1^6 ]$      &$\mathbb{Z}_{\veq^4-1}\times\mathbb{Z}_{\veq-1}^{\,5}$                                &$\mathbb{Z}_{\veq^4-1}\times\mathbb{Z}_{\veq-1}^{\,4}\times\mathbb{Z}_{(\veq-1)/d}$                                 \\[1.9pt]
$[ 4, 2, 1^4]$    &$\mathbb{Z}_{\veq^4-1}\times\mathbb{Z}_{\veq^2-1}\times\mathbb{Z}_{\veq-1}^{\,3}$        &$\mathbb{Z}_{\veq^4-1}\times\mathbb{Z}_{\veq^2-1}\times\mathbb{Z}_{\veq-1}^{\,2}\times\mathbb{Z}_{(\veq-1)/d}$         \\[1.9pt]
$[ 4, 2^2, 1^2]$  &$\mathbb{Z}_{\veq^4-1}\times\mathbb{Z}_{\veq^2-1}^{\,2}\times\mathbb{Z}_{\veq-1}$        &$\mathbb{Z}_{\veq^4-1}\times\mathbb{Z}_{\veq^2-1}^{\,2}\times\mathbb{Z}_{(\veq-1)/d}$         \\[1.9pt]
$[ 4, 2^3]$       &$\mathbb{Z}_{\veq^4-1}\times\mathbb{Z}_{\veq^2-1}^{\,2}\times\mathbb{Z}_{\veq+1}$        &$\mathbb{Z}_{\veq^4-1}\!\times\mathbb{Z}_{\veq^2-1}\!\times\mathbb{Z}_{(\veq^2-1)/d_5}\!\times\mathbb{Z}_{(\veq+1)/d_2}$         \\[1.9pt]
$[ 4, 3, 1^3]$    &$\mathbb{Z}_{\veq^4-1}\times\mathbb{Z}_{\veq^3-1}\times\mathbb{Z}_{\veq-1}^{\,2}$        &$\mathbb{Z}_{\veq^4-1}\times\mathbb{Z}_{\veq^3-1}\times\mathbb{Z}_{\veq-1}\times\mathbb{Z}_{(\veq-1)/d}$         \\[1.9pt]
$[ 4, 3, 2, 1]$   &$\mathbb{Z}_{\veq^4-1}\times\mathbb{Z}_{\veq^3-1}\times\mathbb{Z}_{\veq^2-1}$            &$\mathbb{Z}_{(\veq^3-1)(\veq+1)}\times\mathbb{Z}_{\veq^4-1}\times\mathbb{Z}_{(\veq-1)/d}$             \\[1.9pt]
$[ 4, 3^2 ]$      &$\mathbb{Z}_{(\veq^4-1)(\veq^2+\veq+1)}\times\mathbb{Z}_{\veq^3-1}$                         &$\mathbb{Z}_{(\veq^4-1)(\veq^2+\veq+1)}\times\mathbb{Z}_{(\veq^3-1)/d}$                          \\[1.9pt]
$[ 4^2, 1^2 ]$    &$\mathbb{Z}_{\veq^4-1}^{\,2}\times\mathbb{Z}_{\veq-1}$                                &$\mathbb{Z}_{\veq^4-1}^{\,2}\times\mathbb{Z}_{(\veq-1)/d}$                                 \\[1.9pt]
$[ 4^2, 2 ]$      &$\mathbb{Z}_{\veq^4-1}^{\,2}\times\mathbb{Z}_{\veq+1}$                                &$\mathbb{Z}_{\veq^4-1}\times\mathbb{Z}_{(\veq^4-1)/d_5}\times\mathbb{Z}_{(\veq+1)/d_2}$                                \\[1.9pt]
$[ 5, 1^5 ]$      &$\mathbb{Z}_{\veq^5-1}\times\mathbb{Z}_{\veq-1}^{\,4}$                                &$\mathbb{Z}_{\veq^5-1}\times\mathbb{Z}_{\veq-1}^{\,3}\times\mathbb{Z}_{(\veq-1)/d}$                                 \\[1.9pt]
$[ 5, 2, 1^3]$    &$\mathbb{Z}_{\veq^5-1}\times\mathbb{Z}_{\veq^2-1}\times\mathbb{Z}_{\veq-1}^{\,2}$        &$\mathbb{Z}_{\veq^5-1}\times\mathbb{Z}_{\veq^2-1}\times\mathbb{Z}_{\veq-1}\times\mathbb{Z}_{(\veq-1)/d}$         \\[1.9pt]
$[ 5, 2^2, 1]$    &$\mathbb{Z}_{\veq^5-1}\times\mathbb{Z}_{\veq^2-1}^{\,2}$                              &$\mathbb{Z}_{(\veq^5-1)(\veq+1)}\times\mathbb{Z}_{\veq^2-1}\times\mathbb{Z}_{(\veq-1)/d}$                               \\[1.9pt]
$[ 5, 3, 1^2]$    &$\mathbb{Z}_{\veq^5-1}\times\mathbb{Z}_{\veq^3-1}\times\mathbb{Z}_{\veq-1}$              &$\mathbb{Z}_{\veq^5-1}\times\mathbb{Z}_{\veq^3-1}\times\mathbb{Z}_{(\veq-1)/d}$               \\[1.9pt]
$[ 5, 3, 2]$      &$\mathbb{Z}_{\veq^5-1}\times\mathbb{Z}_{(\veq+1)(\veq^3-1)}$                             &$\mathbb{Z}_{(\veq^5-1)(\veq^2+\veq+1)(\veq+1)}\times\mathbb{Z}_{(\veq-1)/d}$                              \\[1.9pt]
$[ 5, 4, 1]$      &$\mathbb{Z}_{\veq^5-1}\times\mathbb{Z}_{\veq^4-1}$                                    &$\mathbb{Z}_{(\veq^5-1)(\veq^3+\veq^2+\veq+1)}\times\mathbb{Z}_{(\veq-1)/d}$                                     \\[1.9pt]
$[ 5^2 ]$         &$\mathbb{Z}_{\veq^5-1}\times\mathbb{Z}_{\veq^4+\veq^3+\veq^2+\veq+1}$                          &$\mathbb{Z}_{(\veq^5-1)/d_2}\times\mathbb{Z}_{(\veq^4+\veq^3+\veq^2+\veq+1)/d_5}$                           \\[1.9pt]
$[ 6, 1^4]$       &$\mathbb{Z}_{\veq^6-1}\times\mathbb{Z}_{\veq-1}^{\,3}$                                &$\mathbb{Z}_{\veq^6-1}\times\mathbb{Z}_{\veq-1}^{\,2}\times\mathbb{Z}_{(\veq-1)/d}$                                 \\[1.9pt]
$[ 6, 2, 1^2]$    &$\mathbb{Z}_{\veq^6-1}\times\mathbb{Z}_{\veq^2-1}\times\mathbb{Z}_{\veq-1}$              &$\mathbb{Z}_{\veq^6-1}\times\mathbb{Z}_{\veq^2-1}\times\mathbb{Z}_{(\veq-1)/d}$               \\[1.9pt]
$[ 6, 2^2 ]$      &$\mathbb{Z}_{\veq^6-1}\times\mathbb{Z}_{\veq^2-1}\times\mathbb{Z}_{\veq+1}$              &$\mathbb{Z}_{\veq^6-1}\times\mathbb{Z}_{(\veq^2-1)/d_5}\times\mathbb{Z}_{(\veq+1)/d_2}$               \\[1.9pt]
$[ 6, 3, 1]$      &$\mathbb{Z}_{\veq^6-1}\times\mathbb{Z}_{\veq^3-1}$                                    &$\mathbb{Z}_{\veq^6-1}\times\mathbb{Z}_{(\veq^3-1)/d}$                                     \\[1.9pt]
$[ 6, 4]$         &$\mathbb{Z}_{(\veq^6-1)(\veq^2+1)}\times\mathbb{Z}_{\veq+1}$                             &$\mathbb{Z}_{(\veq^6-1)(\veq^2+1)/d_5}\times\mathbb{Z}_{(\veq+1)/d_2}$                              \\[1.9pt]
$[ 7, 1^3]$       &$\mathbb{Z}_{\veq^7-1}\times\mathbb{Z}_{\veq-1}^{\,2}$                                &$\mathbb{Z}_{\veq^7-1}\times\mathbb{Z}_{\veq-1}\times\mathbb{Z}_{(\veq-1)/d}$                                 \\[1.9pt]
$[ 7, 2, 1]$      &$\mathbb{Z}_{\veq^7-1}\times\mathbb{Z}_{\veq^2-1}$                                    &$\mathbb{Z}_{(\veq^7-1)(\veq+1)}\times\mathbb{Z}_{(\veq-1)/d}$                                     \\[1.9pt]
$[ 7, 3]$         &$\mathbb{Z}_{(\veq^7-1)(\veq^2+\veq+1)}$                                                 &$\mathbb{Z}_{(\veq^7-1)(\veq^2+\veq+1)/d}$                                                  \\[1.9pt]
$[ 8, 1^2]$       &$\mathbb{Z}_{\veq^8-1}\times\mathbb{Z}_{\veq-1}$                                      &$\mathbb{Z}_{\veq^8-1}\times\mathbb{Z}_{(\veq-1)/d}$                                       \\[1.9pt]
$[ 8, 2]$         &$\mathbb{Z}_{\veq^8-1}\times\mathbb{Z}_{\veq+1}$                                      &$\mathbb{Z}_{(\veq^8-1)/d_5}\times\mathbb{Z}_{(\veq+1)/d_2}$                                       \\[1.9pt]
$[ 9, 1]$         &$\mathbb{Z}_{\veq^9-1}$                                                            &$\mathbb{Z}_{(\veq^9-1)/d}$                                                             \\[1.9pt]
$[ 10]$           &$\mathbb{Z}_{\veq^9+\veq^8+\ldots+\veq+1}\vphantom{p_{p_{p_p}}}$                           &$\mathbb{Z}_{(\veq^9+\veq^8+\ldots+\veq+1)/d}$                                                  \\[1.9pt]
\hline
\end{longtable}

\bigskip
%\bibliographystyle{amsplain}
%\bibliography{mybib,recognition}

\end{document}